\documentclass[bibtex,12pt,en]{elegantpaper}

\usepackage{extarrows}
\usepackage{esint}
\usepackage{mathrsfs}

\numberwithin{equation}{section}
\allowdisplaybreaks[4]
\everymath{\displaystyle}

\newcommand{\R}{\mathbb R}

\newcommand{\dd}{\,dx}
\DeclareMathOperator{\diver}{div}

\title{The $m$-Laplace equation with a gradient term I: Liouville theorem in the second subcritical case}

\author{Tian Wu \and Jin Yan \and Hua Zhu}

\date{}

\begin{document}

\maketitle

\renewcommand{\thefootnote}{\fnsymbol{footnote}}


\begin{abstract}
\hspace{2em}Let $1<m<n$, $q>0$, and $p\in\mathbb R$. We study positive
$C^1_{\mathrm{loc}}$ weak solutions of
\[
-\Delta_m u=u^p|Du|^q
\qquad\text{in }\mathbb R^n.
\]
If $0<q<m-1$, we prove that every such solution is constant
provided
\[
p+q-m+1<
\frac{(m-1)(m-q)^2}
{(n-m)(m-1-q)}.
\]
If $q\geqslant m-1$, the same conclusion holds for every
$p\in\mathbb R$.

For $0<q<m$, the proof uses an auxiliary function and reduces the
main differential estimate to a lower bound for a function of one
variable. An explicit construction and a local maximum-principle
argument then prove constancy without differentiating across
$\{|Du|=0\}$. When $m=2$, our result answers the natural Liouville problem left open by Bidaut-V\'eron, Garc\'ia-Huidobro and V\'eron
for the whole range $0<q<1$. The case $q=m$ is handled by an
increasing change of the dependent variable, in the spirit of the
related work of Bidaut-V\'eron, while the case $q>m$ follows from
the theorem of Lu and Zhu.

\keywords{$m$-Laplace equation, product gradient term,
Liouville-type theorem}\\
\textbf{2020 Mathematics Subject Classification:}
Primary 35B53; Secondary 35J92, 35B33.
\end{abstract}


\section{Introduction}

Liouville-type theorems are a classical topic in mathematical analysis. They describe parameter ranges in which an equation admits no nonconstant entire solution of a prescribed sign or regularity. Such results are fundamental in the study of a priori estimates, blow-up analysis, existence theory, and the classification of solutions.

For the semilinear Lane--Emden equation, Gidas and Spruck \cite{GS1981} established the classical Liouville theorem in the Sobolev-subcritical range. Serrin and Zou \cite{SerrinZou} developed a general Cauchy--Liouville theory and universal estimates for quasilinear equations of the form
\begin{align}\label{m-eq}
 \Delta_m u+f(u)=0\quad\text{in }\R^n.
\end{align}
Their results include the power case and extend the classical semilinear theory to the $m$-Laplacian.

Motivated by the work of Bidaut-V\'eron, Garc\'ia-Huidobro, and V\'eron \cite{BV-GH-V2019}, we study equations in which the reaction term is the product of a power of the solution and a power of its gradient. More precisely, we consider
\begin{equation}\label{eq:pde}
 -\Delta_m u=u^p|Du|^q\quad\text{in }\R^n,
 \qquad
 \Delta_m u=\diver\bigl(|Du|^{m-2}Du\bigr),
\end{equation}
where $1<m<n$, $q>0$, and $p\in\R$.
A positive $C^1_{\mathrm{loc}}$ weak solution is a function
$u\in C^1_{\mathrm{loc}}(\R^n)$, with $u>0$, such that
\begin{equation*}\label{eq:weak}
 \int_{\R^n}|Du|^{m-2}Du\cdot D\varphi\dd
 =\int_{\R^n}u^p|Du|^q\varphi\dd,
 \qquad \varphi\in C_c^\infty(\R^n).
\end{equation*}
Both integrands are locally integrable: on every compact set, $u$ is bounded away from zero and $u$ and $Du$ are bounded.

\begin{theorem}\label{thm:main}
Let $u$ be a positive $C^1_{\mathrm{loc}}$ weak solution of \eqref{eq:pde}.
Then $u$ is constant in either of the following ranges:
\begin{align}
 &0<q<m-1,\qquad
 p+q-m+1<\frac{(m-1)(m-q)^2}{(n-m)(m-1-q)};
 \label{eq:low}\\[2pt]
 &m-1\leqslant q<m,\qquad p\in\R.
 \label{eq:high}
\end{align}
\end{theorem}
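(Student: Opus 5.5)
We follow the integral Bernstein-type (Serrin–Zou / Bidaut-Véron–Véron) method. Throughout, $0<q<m$. The main move is a change of the dependent variable $u=v^{-\beta}$, or equivalently an auxiliary function $w=u^{-a}|Du|^{b}$, with exponents chosen later. The gradient term is thereby made homogeneous with the $m$-Laplacian. The resulting differential inequality for $L=|Dv|^m$ (or for $w$) then has a coercive leading term, whose coefficient is a function of a single parameter.

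\textbf{Step 1: the pointwise identity.} On the open set $\{Du\neq0\}$, where $u\in C^{2,\alpha}_{\mathrm{loc}}$ by elliptic regularity, we write down the $m$-Bochner identity for $v$. The linearized operator is
\[
\mathcal L\phi=\diver\bigl(|Dv|^{m-2}(D\phi+(m-2)(\nu\cdot D\phi)\nu)\bigr),\qquad \nu=Dv/|Dv|.
\]
We use the refined Kato inequality $|D^2v|^2\geqslant \frac{n}{n-1}(\ldots)$, split into the radial direction $\nu$ and the tangential trace. This gives an inequality of the form
\[
\mathcal L(|Dv|^m)\geqslant c_1\frac{(\Delta_m v)^2}{|Dv|^{m}}+\ldots,
\]
with a quadratic form in the quantities $\Delta_m v/|Dv|^{m-2}$, $v^{-1}|Dv|^2$ and $\nu\cdot D|Dv|$.

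Substituting the equation expresses $\Delta_m v$ through $v^{p'}|Dv|^{q}$ and $|Dv|^m/v$. The coefficients then depend on $(n,m,p,q)$ and on $\beta$, together with one free ratio $t$ (the relative size of the two terms). Positivity of the form for all $t$ reduces to a lower bound $\min_t g_\beta(t)>0$ for an explicit function $g$ of one variable. Optimizing $\beta$ should produce exactly the threshold $p+q-m+1<\frac{(m-1)(m-q)^2}{(n-m)(m-1-q)}$. For $q\geqslant m-1$, the factor $m-1-q\leqslant 0$ should make the form positive for every $p$. I expect this algebra, finding $\beta$ and verifying $g>0$ uniformly, to be the main computational obstacle.

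\textbf{Step 2: from the inequality to constancy.} Coercivity gives, on $\{Du\neq0\}$,
\[
\mathcal L w\geqslant \delta\, w^{1+\theta}\,|Dv|^{m-2}\cdot(\text{positive weight})-C|Dw|^2/w
\]
for suitable $\theta>0$. Two routes are available.

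The first is an Osserman–Keller type comparison. We construct an explicit radial supersolution $W_R(x)=C\,R^{\gamma}(R^2-|x|^2)^{-\sigma}$ on $B_R$ and compare it with $w$ by a local maximum principle. At an interior maximum point of $w/W_R$ we need $Du\neq0$. If $Du=0$ there, then $w=0$ (for $b>0$), so that point is not a positive maximum. Hence the comparison can be run without differentiating across $\{|Du|=0\}$.

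Letting $R\to\infty$ gives $w\leqslant CR^{\gamma-2\sigma}\to0$. So $Du\equiv0$ and $u$ is constant.

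The second route, as a fallback, is the integral version: test with $w^{s}\eta^k$ and absorb via Young's inequality. The cutoff must be regularized by $(|Du|^2+\varepsilon)$ near the critical set.

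\textbf{Degenerate set.} The delicate point is that $\{Du=0\}$ may have positive measure, and $u$ is only $C^{1,\alpha}$ there. This is why we favor the pointwise maximum-principle argument. We choose $b$ large enough that $w$ vanishes continuously on the critical set, so that every positive local maximum lies in the regular region, where the computations of Step 1 are legitimate.
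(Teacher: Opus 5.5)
There is a genuine gap in Step~1. No pure power auxiliary function can reach the stated threshold, and none gives every $p$ when $m-1\leqslant q<m$.

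Take $w=u^{-a}|Du|^b$; the choice $|Dv|^m$ with $u=v^{-\beta}$ is of this form. Then $\log w=\frac b2\log h+(b-a)\log u$ with $h=|D\log u|^2$, so up to a positive factor there is only one free parameter. Work in a frame with $e_1=Du/|Du|$. At a critical point of $w$ the mixed derivatives $u_{1\alpha}$ vanish, and $X:=uu_{11}/|Du|^2$ is forced to equal the constant $a/b$, independently of $t=u^{p+1}|Du|^{q-m}$. The equation fixes the trace of the tangential Hessian block $T$, and $|T|^2\geqslant(\operatorname{tr}T)^2/(n-1)$. With these, the sharp pointwise lower bound for $\mathcal L\log w/h$ at such a point is $b$ times
\[
\frac{t^2}{n-1}+\Bigl[X\Bigl(\frac{2(m-1)}{n-1}+m-1-q\Bigr)-p\Bigr]t+(m-1)X\Bigl(1-\frac{n-m}{n-1}X\Bigr),\qquad X=\frac ab .
\]
Positivity near $t=0$ forces $0\leqslant X\leqslant\frac{n-1}{n-m}$. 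Positivity for all $t\geqslant0$ then bounds $p$ above by a finite number, for every $q$. For $m=2$, $n=3$, $q=\frac12$ the best power gives only $p<\frac{3+\sqrt{13}}2\approx3.30$, while the theorem asserts $p<5$. For $m=2$, $n=3$, $q=1$ it gives $p<1+\sqrt2$ instead of all $p$. So ``optimizing $\beta$'' cannot produce the threshold, and the sign of $m-1-q$ does not rescue the range $q\geqslant m-1$. Your integral fallback comes with no computation suggesting it beats this bound.

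The missing idea is to let the exponent depend on $t$. The paper uses $G=u^{-\frac{m-q}{2}\beta}h^{\frac{m-q}{2}}e^{\Phi(t)}$, whose logarithmic slope $\omega(t)=t\Phi'(t)$ rises from $0$ at $t=0$ toward $1$ but stays a fixed distance below it. Essentially $\omega=\lambda t/(1+\lambda t)$ with $\lambda$ tuned to $p,q$, perturbed to stay below $1$, and cut off for large $t$ so that $\Phi$ is bounded. Two things then change. The critical-point value of $X$ now varies with $t$. The chain rule also contributes the extra term $(m-1)t\omega'(t)\bigl(\ell-\frac{m-q}2\beta\bigr)^2/(1-\omega)^2\geqslant0$, where $\ell=p+q-m+1$. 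Together these push the admissible range to $\ell<\frac{(m-1)(m-q)^2}{(n-m)(m-1-q)}$, and to all $\ell$ when $q\geqslant m-1$.

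In Step~2, the ``positive weight'' generally contains a power of $u$. Absorbing it requires the lower bound $u\geqslant c(1+|x|)^{-(n-m)/(m-1)}$ for positive $m$-superharmonic functions. That bound restricts the exponent to $\beta<2(m-1)/(n-m)$ in the paper's normalization, which is exactly where the $t=0$ coefficient vanishes. Your handling of the critical set, making the auxiliary function vanish where $Du=0$ so the maximum lies in the regular region, matches the paper and is fine.
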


Theorem~\ref{thm:main} is the main result of this paper. The borderline
case $q=m$ can be reduced to the $m$-Laplace equation by an increasing
change of the dependent variable. A closely related transformation was
used by Bidaut-V\'eron \cite{BidautVeron2021}, who obtained Liouville
results for the case $q=m$, in particular for $p>-1$. For completeness,
we include a short argument that applies to every $p\in\mathbb R$.
The range $q>m$ follows from the theorem of Lu and Zhu \cite{LuZhu}.
Together, these results yield the following complete statement.

\begin{corollary}\label{cor:complete}
Let $u$ be a positive $C^1_{\mathrm{loc}}$ weak solution of
\eqref{eq:pde}. Then $u$ is constant in either of the following
ranges:
\begin{align*}
 &0<q<m-1,\qquad
 p+q-m+1<
 \frac{(m-1)(m-q)^2}{(n-m)(m-1-q)};\\[2pt]
 &q\geqslant m-1,\qquad p\in\R.
\end{align*}
\end{corollary}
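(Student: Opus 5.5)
The corollary splits into three regimes, handled separately. For $0<q<m$ everything is already contained in Theorem~\ref{thm:main}. If $0<q<m-1$ we are in range \eqref{eq:low}. If $m-1\leqslant q<m$ we are in range \eqref{eq:high}, where every $p\in\R$ is allowed. So it remains to treat $q=m$ and $q>m$ for arbitrary $p\in\R$.

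For $q=m$ we remove the gradient term by an increasing change of the dependent variable $v=\Phi(u)$. Look for $\Phi$ with $\Phi'>0$ such that $\Delta_m v=0$. Formally,
\[
\Delta_m v=\Phi'(u)^{m-1}\Delta_m u+(m-1)\Phi'(u)^{m-2}\Phi''(u)|Du|^m .
\]
Substituting $\Delta_m u=-u^p|Du|^m$, we need $(m-1)\Phi''=\Phi' u^p$. Hence
\[
\Phi'(s)=\exp\Bigl(\tfrac{1}{m-1}\int_1^s t^p\,dt\Bigr)>0 ,
\]
which is smooth on $(0,\infty)$ for every $p\in\R$. We must check this in the weak sense, since $u$ is only $C^1$. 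Take the test function $\varphi\,\Phi'(u)^{m-1}$ with $\varphi\in C_c^\infty$. This is admissible: it is $C^0\cap W^{1,\infty}_{\mathrm{loc}}$ because $u$ is $C^1$ and positive, and the weak formulation extends to such functions by density. The computation then gives $\int |Dv|^{m-2}Dv\cdot D\varphi=0$, so $v$ is $m$-harmonic.

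Next we need a one-sided bound on $v$. Since $\Phi$ is increasing and $u>0$, $v$ is bounded below by $\lim_{s\to0^+}\Phi(s)$, provided this limit is finite. For $p>-1$ it is finite, since $\Phi'$ stays bounded near $0$. For $p\leqslant -1$ we have $\int_1^s t^p\,dt\to-\infty$ as $s\to0^+$, so $\Phi'(s)\to 0$ there; in fact $\Phi'(s)$ behaves like $\exp\bigl(-c\,s^{p+1}\bigr)$ for $p<-1$ and like $s^{1/(m-1)}$ for $p=-1$. In both cases $\Phi'$ is integrable near $0$, so $\Phi(0^+)$ is finite. Thus $v$ is a positive (after adding a constant) $m$-harmonic function on $\R^n$. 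The Liouville theorem for positive $m$-harmonic functions then says $v$ is constant; this follows from Serrin's Harnack inequality or from the Serrin--Zou theory cited earlier. Since $\Phi$ is strictly increasing, $u$ is constant. The main point to verify carefully is the test-function justification in the weak formulation, together with this finiteness of $\Phi(0^+)$ for all $p$. Both are elementary.

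For $q>m$ we invoke the theorem of Lu and Zhu \cite{LuZhu}, which gives constancy of positive $C^1$ weak solutions of $-\Delta_m u=u^p|Du|^q$ for $q>m$ and all $p$. The only thing to check is that our notion of solution fits their hypotheses. Combining the three regimes proves Corollary~\ref{cor:complete}.
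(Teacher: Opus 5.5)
Your proposal is correct and follows the paper's proof essentially step for step. You use Theorem~\ref{thm:main} for $0<q<m$ and the theorem of Lu and Zhu for $q>m$. For $q=m$ you use an increasing change of variable with $\Phi'(s)^{m-1}$ a constant multiple of the paper's $g(s)$, tested against $\varphi\,\Phi'(u)^{m-1}$, which produces a positive entire $m$-harmonic function. The only cosmetic difference is that you normalize $\Phi$ at $s=1$ and then check that $\Phi(0^+)$ is finite, whereas the paper builds this in by defining $F(s)=\int_0^s g(\tau)^{1/(m-1)}\,d\tau$.
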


There is no lower restriction on $p+q-m+1$ in \eqref{eq:low}.
Since $q>0$, every positive constant solves \eqref{eq:pde}.
The case $q=0$ is not included; with the convention $|Du|^0=1$, a positive constant would no longer be a solution.

We next recall some of the main developments for elliptic equations with gradient terms. When $m=2$ and the reaction depends only on the gradient, equation~\eqref{eq:pde} reduces to
\begin{equation}\label{eq:pde2}
 -\Delta u=|Du|^q\quad\text{in }\R^n.
\end{equation}
Lions \cite{Lions1985} proved that every classical entire solution is constant when $q>1$. Souplet and Zhang \cite{SoupletZhang2006} studied more general inhomogeneous Hamilton--Jacobi equations. For the quasilinear equation with gradient absorption, Bidaut-V\'eron, Garc\'ia-Huidobro, and V\'eron \cite{BVGHV2014} established local gradient estimates, universal bounds, and Liouville theorems. Bidaut-V\'eron \cite{BidautVeron2021} later treated the super-natural source-gradient range for the $m$-Laplacian and obtained both Liouville results and asymptotic information.

A related line of work concerns equations in which the reaction and gradient terms are added rather than multiplied. Bidaut-V\'eron, Garc\'ia-Huidobro, and V\'eron \cite{BVGHV2020A,BVGHV2020R} studied a priori estimates, Liouville properties, and radial solutions for semilinear equations of the form
\[
 -\Delta u=u^p+M|Du|^q.
\]
Filippucci, Sun, and Zheng \cite{FilippucciSunZheng2024} obtained corresponding estimates and Liouville theorems for quasilinear equations involving the $m$-Laplacian. In this direction, Ma, Wu, and Zhang \cite{MaWuZhang2024} proved a Liouville theorem in the critical and subcritical cases for
\[
 \Delta u+Nu^p+M|Du|^q=0,
 \qquad q=\frac{2p}{p+1},
\]
using a differential identity and Young's inequality. Wu and Zhang \cite{WuZhang2025} extended this approach to the quasilinear equation
\[
 \Delta_m u+u^p+M|Du|^q=0,
 \qquad q=\frac{mp}{p+1}.
\]
These works show that integral identities provide an effective alternative to Bernstein-type arguments in critical problems with gradient terms.

We now turn to product-type reactions. When $m=2$, equation~\eqref{eq:pde} becomes
\begin{equation}\label{eq:pde4}
 -\Delta u=u^p|Du|^q\quad\text{in }\R^n.
\end{equation}
Bidaut-V\'eron, Garc\'ia-Huidobro, and V\'eron \cite{BV-GH-V2019} studied \eqref{eq:pde4} for $p\geqslant0$, $0\leqslant q<2$, and $p+q>1$. They proved the Liouville property in part of the parameter range and determined the exact threshold for the existence of nonconstant radial positive entire solutions. More precisely, when $0\leqslant q<1$, such radial solutions exist precisely when
\[
 (n-2)p+(n-1)q
 \geqslant n+\frac{2-q}{1-q}.
\]
Their results left the natural question of whether every positive entire solution must be constant below this threshold, including the part not covered by their Liouville theorem.

Ma and Wu \cite{MaWu2026} later formulated this question explicitly
and answered it completely when
\[
0\leqslant q\leqslant\frac{1}{n-1}.
\]
For larger $q$, they also improved the Liouville region obtained in
\cite{BV-GH-V2019}. More precisely, they proved that every positive
entire solution is constant, under the assumptions $p\ge0$ and
$p+q>1$, whenever
\[
p^2+
\left(
\frac{n-1}{n-2}q-\frac n2-\frac{3}{(n-2)^2}
\right)p
+\frac{1-(n-1)q}{(n-2)^2}<0,
\]
with
\[
\frac{1}{n-1}<q<1
\quad\text{if } n=3,
\qquad
\frac{1}{n-1}<q<2
\quad\text{if } n\ge4.
\]
When $m=2$, condition~\eqref{eq:low} is equivalent to
\[
p(n-2)+(n-1)q
<
n+\frac{2-q}{1-q}.
\]
Thus Theorem~\ref{thm:main} answers the above question for every
$0<q<1$. In particular, it completes the range left open by
\cite{BV-GH-V2019} and \cite{MaWu2026}, and it does not require the
assumptions $p\ge0$ and $p+q>1$.

Several recent papers and preprints have further developed the
semilinear product-gradient theory. Lu \cite{Lu2026} studied
positive solutions of \eqref{eq:pde4} for $q\geqslant0$ and
$p\in\mathbb R$ by deriving a new differential identity for
$|\nabla\log u|^2$. He obtained substantially larger Liouville
regions than those previously known. In particular, when $n\geqslant3$
and
\[
0\leqslant q\leqslant
1-\frac{1}{\sqrt{n-1}},
\]
his Liouville theorem reaches the threshold determined by the
existence of radial solutions and is therefore optimal in this
range. He also proved constancy in several further regions,
including
\[
q\geqslant\frac{5}{3}
\]
and
\[
p+q\leqslant\frac{n+2}{n-2},
\qquad n\geqslant3,
\]
together with additional parameter ranges beyond these simple
conditions. Moreover, for the parameter pairs covered by his
Liouville theorem, he established corresponding local gradient
estimates and Harnack inequalities. His arguments also extend to
Riemannian manifolds under lower bounds on the Ricci curvature.

Dou, Shi, Wu, and Zhu \cite{DouShiWuZhu2026} used the invariant
tensor technique to establish a differential identity on complete
noncompact manifolds and obtained classification results in the
second critical case with dimensions $n\leqslant5$. Very recently,
Liang and Zhang \cite{LiangZhang2026} derived Liouville theorems
and universal local estimates for \eqref{eq:pde4} through a new
linearized differential inequality.

For the general $m$-Laplacian, Filippucci \cite{Filippucci2009}
proved nonexistence results that include the first subcritical range
\[
(n-m)p+(n-1)q<(m-1)n.
\]
Chang, Hu, and Zhang \cite{ChangHuZhang2022} combined Bernstein
estimates with blow-up arguments to obtain Liouville theorems in a
larger part of the parameter space. For the second critical case, the authors recently obtained a complete
classification of positive entire solutions in \cite{Wu-Y-Z2026},
drawing on ideas from the quasilinear classification and Harnack
framework recently developed by Zhang \cite{ZhangYi2026}. Filippucci, Sun, and Zheng
\cite{FilippucciSunZheng2024} treated related equations with separate
source and gradient terms.

On complete noncompact Riemannian manifolds, Sun, Xiao, and Xu
\cite{SunXiaoXu2022} established sharp volume-growth criteria for the
inequality
\[
\Delta_m u+u^p|Du|^q\leqslant0.
\]
He, Hu, and Wang \cite{HeHuWang2026} and He and Wang
\cite{HeWang2026} used Nash--Moser iteration to obtain logarithmic
gradient estimates and Liouville properties for broad classes of
quasilinear equations on manifolds. Lu and Zhu \cite{LuZhu} proved a
general product-gradient theorem which, in the power case, covers
$q>m$ and every real $p$. A recent preprint of Bhakta, Biswas, and
Filippucci \cite{BhaktaBiswasFilippucci2025} extends the discussion
to nonhomogeneous $(p,q)$-Laplacian operators with product-type or
additive gradient reactions.

The main argument of this paper concerns $0<q<m$. We first reduce the problem to a lower bound for a function $K_\beta(t)$ of one variable obtained from an auxiliary function of $u$ and $|Du|$. We then construct the required function explicitly. The maximum-principle argument is local at a point where $|Du|>0$, while a standard lower bound for positive $m$-superharmonic functions controls the behavior at infinity. Section~\ref{sec:criterion} establishes the criterion used in the proof, and Section~\ref{sec:construction} constructs the auxiliary function and proves Theorem~\ref{thm:main}. In Section~\ref{sec:upper}, we treat $q=m$ by a direct change of the dependent variable and recall the theorem of Lu and Zhu for $q>m$. This proves Corollary~\ref{cor:complete}.

\section{A criterion for constancy}\label{sec:criterion}

Throughout this section, assume that $0<q<m$. Recall that
\[
 \Delta_m u=\operatorname{div}\bigl(|Du|^{m-2}Du\bigr).
\]
All differential calculations will be made on the regular set
\[
 \Omega_{\mathrm{reg}}=\{x\in\R^n:|Du(x)|>0\}.
\]
Because $u$ is positive and $u,Du$ are continuous, the right-hand side of \eqref{eq:pde} is locally bounded. The interior regularity theory for the $m$-Laplacian therefore yields
$u\in C^{1,\alpha}_{\mathrm{loc}}$ for some $\alpha>0$; see \cite{DiBenedetto,Tolksdorf}.

We explain why higher derivatives may be used on $\Omega_{\mathrm{reg}}$. For $\xi\ne0$,
\[
 \frac{\partial}{\partial\xi_j}\bigl(|\xi|^{m-2}\xi_i\bigr)
 =|\xi|^{m-2}\left(\delta_{ij}+(m-2)\frac{\xi_i\xi_j}{|\xi|^2}\right).
\]
On every compact subset of $\Omega_{\mathrm{reg}}$, both $u$ and $|Du|$ are bounded above and bounded away from zero. Hence the maps
\[
 \xi\longmapsto |\xi|^{m-2}\xi,
 \qquad
 (s,\xi)\longmapsto s^p|\xi|^q
\]
are smooth on the relevant ranges, and the matrix displayed above is uniformly positive definite. Local difference-quotient estimates followed by Schauder bootstrapping imply
\[
 u\in C^\infty(\Omega_{\mathrm{reg}}).
\]
Thus every differentiated identity below is classical on $\Omega_{\mathrm{reg}}$. No second derivative across $\{|Du|=0\}$ is used.

\subsection{Differential identities on the regular set}
On $\Omega_{\mathrm{reg}}$, introduce
\begin{equation*}\label{eq:basic}
 r=|Du|,\qquad h=\frac{r^2}{u^2},\qquad
 t=u^{p+1}r^{q-m},\qquad
 \ell=p+q-m+1.
\end{equation*}
Here $h=|D\log u|^2$ is the squared logarithmic gradient, while $\ell$ records the scaling of $t$. These quantities satisfy
\begin{equation*}\label{eq:scaling}
 t=u^\ell h^{-(m-q)/2}.
\end{equation*}

For later calculations, define
\[
 A_{ij}=\delta_{ij}+(m-2)\frac{u_i u_j}{r^2}.
\]
We shall use the normalized linearized operator and its principal second-order part:
\begin{equation}\label{eq:operators}
 \mathcal P\varphi=r^{2-m}\partial_i\bigl(r^{m-2}A_{ij}\varphi_j\bigr),
 \qquad
 \mathcal L\varphi=A_{ij}\varphi_{ij}.
\end{equation}
The operator $\mathcal P$ is convenient when the equation is differentiated, while $\mathcal L$ is the operator used at a maximum point.

Fix a point in $\Omega_{\mathrm{reg}}$ and choose an orthonormal frame with $e_1=Du/r$. Write
\begin{equation*}\label{eq:hessian}
 X=\frac{u u_{11}}{r^2},\qquad
 Z_\alpha=\frac{u u_{1\alpha}}{r^2},\qquad
 T_{\alpha\beta}=\frac{u u_{\alpha\beta}}{r^2}
 \quad(2\leqslant\alpha,\beta\leqslant n).
\end{equation*}
Thus $X$ is the normalized second derivative in the gradient direction, $Z=(Z_2,\ldots,Z_n)$ contains the mixed derivatives, and $T=(T_{\alpha\beta})_{2\leqslant\alpha,\beta\leqslant n}$ is the tangential block. We use
\[
 |Z|^2=\sum_{\alpha=2}^n Z_\alpha^2,
 \qquad
 |T|^2=\sum_{\alpha,\beta=2}^n T_{\alpha\beta}^2.
\]
Since
\[
 \Delta_m u=r^{m-2}A_{ij}u_{ij},
\]
multiplying equation~\eqref{eq:pde} by $u/r^m$ yields
\begin{equation}\label{eq:trace}
 (m-1)X+\operatorname{tr}T=-t.
\end{equation}
The gradients of the two basic logarithmic quantities are
\begin{equation}\label{eq:gradients}
 \frac{u}{r}D\log u=e_1,\qquad
 \frac{u}{r}D\log h=2(X-1,Z).
\end{equation}
In the second identity, $(X-1,Z)$ denotes the vector with first component $X-1$ and tangential components $Z_2,\ldots,Z_n$.

\begin{lemma}\label{lem:logs}
On $\Omega_{\mathrm{reg}}$, we have
\begin{align}
 \frac{\mathcal P\log u}{h}&=-(m-1)(1+t),\label{eq:logu}\\
 \frac{\mathcal P\log h}{h}
 &=2\bigl[-(m-1)X^2+(m-2)|Z|^2+|T|^2
             -pt-qtX+(m-1)(1+t)\bigr].\label{eq:logh}
\end{align}
\end{lemma}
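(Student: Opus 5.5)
The plan is to compute both quantities directly in the orthonormal frame with $e_1=Du/r$ fixed at a point of $\Omega_{\mathrm{reg}}$. All derivatives there are classical because $u\in C^\infty(\Omega_{\mathrm{reg}})$. Two elementary facts drive everything.

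First, from $r^{m-2}A_{ij}u_j=(m-1)r^{m-2}u_i$ and $\partial_j\bigl(r^{m-2}u_i\bigr)=r^{m-2}A_{ij}u_{ij}$ restated for each index, we get
\[
\partial_k\bigl(r^{m-2}u_i\bigr)=r^{m-2}A_{ij}u_{jk}.
\]
Second, for positive $\psi$ the chain rule gives
\[
\mathcal P\log\psi=\frac{\mathcal P\psi}{\psi}-\frac{A_{ij}\psi_i\psi_j}{\psi^2}.
\]

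\textbf{Identity \eqref{eq:logu}.} Since $r^{m-2}A_{ij}(\log u)_j=(m-1)r^{m-2}u_i/u$, taking the divergence gives
\[
\mathcal P\log u=(m-1)\Bigl(\frac{r^{2-m}\Delta_m u}{u}-\frac{r^2}{u^2}\Bigr).
\]
Substituting $\Delta_m u=-u^pr^q$ and using
\[
u^{p-1}r^{q-m+2}=ht
\]
(recall $h=r^2/u^2$ and $t=u^{p+1}r^{q-m}$) yields
\[
\mathcal P\log u=-(m-1)h(1+t).
\]

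\textbf{Identity \eqref{eq:logh}.} Write $\log h=\log r^2-2\log u$, so that
\[
\mathcal P\log h=\mathcal P\log r^2-2\mathcal P\log u,
\]
and the last term is already known. For $\mathcal P(r^2)$ I use a Bochner-type computation.

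1. Differentiate the divergence-form identity: $\partial_i\bigl(r^{m-2}A_{ij}u_{jk}\bigr)=\partial_k(\Delta_m u)$ on $\Omega_{\mathrm{reg}}$.
2. Contract with $u_k$. This gives
\[
\tfrac12\mathcal P(r^2)=A_{ij}u_{ik}u_{jk}+r^{2-m}u_k\,\partial_k\bigl(-u^pr^q\bigr).
\]
3. In the frame, the Hessian term equals
\[
(m-1)u_{11}^2+m|u_{1\alpha}|^2+|u_{\alpha\beta}|^2.
\]
4. The source term is $-p\,u^{p-1}r^{q+2}-q\,u^pr^{q}u_{11}$, using $u_k r_k=r\,u_{11}$.
5. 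Since $(r^2)_i=2r\,u_{1i}$, the gradient correction is
\[
\frac{A_{ij}(r^2)_i(r^2)_j}{r^4}=\frac{4\bigl((m-1)u_{11}^2+|u_{1\alpha}|^2\bigr)}{r^2}.
\]

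Multiplying everything by $u^2/r^4=1/(h r^2)$ converts $u_{11},u_{1\alpha},u_{\alpha\beta}$ into $X,Z_\alpha,T_{\alpha\beta}$. The source terms become $-pt$ and $-qtX$, because
\[
u^{p+1}r^{q-m}=t \qquad\text{and}\qquad u^{p+1}r^{q-m}\cdot\frac{u_{11}u}{r^2}\cdot\frac1u\cdot u=tX.
\]
The Hessian terms combine as
\[
2\bigl[(m-1)X^2+m|Z|^2+|T|^2\bigr]-4\bigl[(m-1)X^2+|Z|^2\bigr].
\]
Collecting,
\[
\frac{\mathcal P\log r^2}{h}=2\bigl[-(m-1)X^2+(m-2)|Z|^2+|T|^2-pt-qtX\bigr].
\]
Adding $-2\mathcal P\log u/h=2(m-1)(1+t)$ then gives \eqref{eq:logh}.

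The main obstacle is bookkeeping rather than any conceptual step. The points needing care are the correct normalization of $\mathcal P$ (the factor $r^{2-m}$ must be carried through when the equation is differentiated) and checking that the coefficient of $|Z|^2$ comes out as $m-2$. That coefficient arises as $2m-4$ after the mixed terms from the Hessian norm, the $(m-2)$-anisotropy, and the gradient correction are combined. I would double-check it by testing the final formula on a one-dimensional profile ($Z=T=0$), and separately on the tangential contribution.
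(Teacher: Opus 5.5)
Your proposal is correct and follows the paper's proof: \eqref{eq:logu} comes from $A\,Du=(m-1)Du$, and \eqref{eq:logh} comes from the Bochner-type identity $\mathcal P(r^2)=2A_{ij}u_{ik}u_{jk}+2r^{2-m}u_k\partial_k\Delta_m u$, the conversion to $\mathcal P\log r^2$, and $\log h=\log r^2-2\log u$. The only slip is the displayed source term in step 4, which is missing the prefactor $r^{2-m}$ and should read $r^{2-m}\bigl(-pu^{p-1}r^{q+2}-qu^pr^{q}u_{11}\bigr)$; your conversion to $-pt-qtX$ already uses this factor.
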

\begin{proof}
Since $A Du=(m-1)Du$,
\[
 \mathcal P u=(m-1)r^{2-m}\Delta_m u.
\]
Thus the equation~\eqref{eq:pde} implies \eqref{eq:logu}. Differentiating \eqref{eq:pde} on $\Omega_{\mathrm{reg}}$ yields
\begin{align*}
 \mathcal P(r^2)=2A_{ij}u_{ki}u_{kj}+2r^{2-m}u_k\partial_k\Delta_m u,
 \qquad\partial_k\Delta_m u=-p u^{p-1}u_k r^q-q u^p r^{q-1}r_k.
\end{align*}
It follows that
\[
 \frac{\mathcal P\log r^2}{h}
 =2\bigl[-(m-1)X^2+(m-2)|Z|^2+|T|^2-pt-qtX\bigr].
\]
Since $\log h=\log r^2-2\log u$, subtracting twice \eqref{eq:logu} proves \eqref{eq:logh}.
\end{proof}

\subsection{The auxiliary function and the function $K_\beta$}
Let $\Phi:[0,\infty)\to\R$ be a smooth function and let $\beta$ be a constant to be determined later. Define the auxiliary function
\begin{equation}\label{eq:auxiliary}
 G=u^{-\frac{m-q}{2}\beta}h^{\frac{m-q}{2}}e^{\Phi(t)}.
\end{equation}
Rather than prescribing $\Phi$ directly, it is convenient to use
\begin{equation*}\label{eq:slope}
 \omega(t)=t\Phi'(t)=\frac{d}{d\log t}\Phi(t).
\end{equation*}
Since
\[
 \log t=\ell\log u-\frac{m-q}{2}\log h,
\]
the differential of $\log G$ can be written as
\[
 d\log G=U(t)\,d\log h+V(t)\,d\log u,
\]
where
\begin{equation}\label{eq:UV}
 U(t)=\frac{m-q}{2}\bigl(1-\omega(t)\bigr),\qquad
 V(t)=-\frac{m-q}{2}\beta+\ell\omega(t).
\end{equation}
The functions $U$ and $V$ simply record the two coefficients in this differential. The second derivative of $\Phi(t)$ with respect to the variables $(\log u,\log h)$ is
\[
 t\omega'(t)\,d(\log t)\otimes d(\log t).
\]
The chain rule for $\mathcal P$ therefore becomes
\begin{equation}\label{eq:chain}
 \mathcal P\log G
 =U\mathcal P\log h+V\mathcal P\log u
       +t\omega'(t)\,\langle A D\log t,D\log t\rangle.
\end{equation}
We shall always require $1-\omega(t)>0$, so that $U(t)>0$.

Equation~\eqref{eq:gradients} implies
\[
\frac{u}{r}D\log G
=
2U(X-1,Z)+V e_1.
\]
Since $U(t)>0$, the condition $D\log G=0$ determines $Z$ and $X$ uniquely:
\[
Z=0,\qquad
X=1-\frac{V(t)}{2U(t)}=:X_*(t).
\]
Thus, at a critical point of $G$, the quantity
\[
X=\frac{u\,u_{11}}{|Du|^2}
\]
is completely determined by $t$.

At such a point, the preceding identities also yield
\[
D\log t
=
\frac{\ell-\frac{m-q}{2}\beta}{1-\omega(t)}
D\log u,
\]
and therefore
\[
\frac{\langle A D\log t,D\log t\rangle}{h}
=
(m-1)
\frac{\left(\ell-\frac{m-q}{2}\beta\right)^2}
{\bigl(1-\omega(t)\bigr)^2}.
\]

It remains to deal with the tangential part $T$ of the Hessian.
By \eqref{eq:trace} and the identity $X=X_*(t)$,
\[
\operatorname{tr}T
=
-t-(m-1)X_*(t).
\]
For every symmetric $(n-1)\times(n-1)$ matrix with this trace,
\[
|T|^2
\geqslant
\frac{(\operatorname{tr}T)^2}{n-1}
=
\frac{\bigl(t+(m-1)X_*(t)\bigr)^2}{n-1},
\]
with equality when $T$ is a multiple of the identity.

Consequently, among all Hessian matrices satisfying
$D\log G=0$ and \eqref{eq:trace}, the minimum of
$\mathcal P\log G/h$ depends only on $t$.
We denote this minimum by $K_\beta(t)$:
\begin{align}
K_\beta(t)
={}&
2U(t)\left[
-(m-1)X_*(t)^2
+\frac{\bigl(t+(m-1)X_*(t)\bigr)^2}{n-1}
\right]\notag\\
&+2U(t)\bigl[
-pt-qtX_*(t)+(m-1)(1+t)
\bigr]\notag\\
&-(m-1)V(t)(1+t)+(m-1)t\omega'(t)
\frac{\left(\ell-\frac{m-q}{2}\beta\right)^2}
{\bigl(1-\omega(t)\bigr)^2}.
\label{eq:kernel}
\end{align}
In particular, at every regular point where $D\log G=0$, we have
\[
\frac{\mathcal P\log G}{h}\geqslant K_\beta(t).
\]
Hence, if $K_\beta(t)>0$ for all $t\geqslant0$, then
$\mathcal P\log G>0$ at every critical point of $G$ in
$\Omega_{\mathrm{reg}}$.
The differential problem is therefore reduced to
the positivity of the one-variable function $K_\beta$. When $\omega$ depends on an additional parameter, we include that
parameter in the notation for $K_\beta$.

\subsection{From the estimate for $K_\beta$ to constancy}
Since $u$ is a positive entire weakly $m$-superharmonic function,
the standard lower bound
\begin{equation}\label{eq:lower}
u(x)\geqslant c(1+|x|)^{-\frac{n-m}{m-1}}
\qquad\text{for all }x\in\mathbb R^n
\end{equation}
holds for some constant $c>0$ depending on $u$; see
\cite{SerrinZou}.

\begin{proposition}\label{prop:criterion}
Assume that a smooth function $\Phi:[0,\infty)\to\R$ and a constant
$\beta$ can be chosen so that
\begin{equation}\label{eq:beta}
 0\leqslant\beta<\beta_*:=\frac{2(m-1)}{n-m},
\end{equation}
and, for every $t\geqslant0$,
\begin{equation}\label{eq:bounds}
 0\leqslant\omega(t)\leqslant1-\rho,\qquad
 |t\omega'(t)|+|\Phi(t)|\leqslant C,\qquad
 K_\beta(t)\geqslant\kappa(1+t)^2,
\end{equation}
where $\rho>0$, $\kappa>0$, and $C<\infty$. Then every positive
$C^1_{\mathrm{loc}}$ weak solution of \eqref{eq:pde} is constant.
\end{proposition}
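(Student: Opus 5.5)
Argue by contradiction: assume $u$ is a nonconstant positive solution and localize $G$ with a cutoff. Fix $R>1$ and a cutoff $\eta\in C_c^\infty(B_{2R})$ with $0\leqslant\eta\leqslant1$, $\eta=1$ on $B_R$, $|D\eta|\leqslant C\eta^{1/2}/R$ and $|D^2\eta|\leqslant C/R^2$. Consider
\[
W=\eta\,G=\eta\,u^{-\frac{m-q}{2}\beta}h^{\frac{m-q}{2}}e^{\Phi(t)}.
\]
Since $|\Phi|\leqslant C$, we have $W\simeq \eta\,u^{-\frac{m-q}{2}\beta}h^{\frac{m-q}{2}}$. Because $u$ is nonconstant, $Du\ne0$ somewhere, so $W$ has a positive maximum on $\overline{B_{2R}}$, attained at an interior point $x_0$ with $\eta(x_0)>0$. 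Since $W=0$ wherever $Du=0$, the point $x_0$ lies in $\Omega_{\mathrm{reg}}$, and all computations there are classical. This is the local maximum-principle step: we never differentiate across $\{|Du|=0\}$.

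At $x_0$ we have $D\log G=-D\log\eta$ and $\mathcal L\log W\leqslant0$, in the sense $A_{ij}(\log W)_{ij}\leqslant0$ (recall $A$ is positive definite). The next task is to compare $\mathcal P\log G$ with $\mathcal L\log G$. The difference is the first-order term $r^{2-m}\partial_i(r^{m-2}A_{ij})(\log G)_j$, and it involves the Hessian of $u$ linearly. We also need a perturbed version of the computation leading to \eqref{eq:kernel}. With $D\log G=-D\log\eta\ne0$, the quantities $Z$ and $X$ are no longer exactly $0$ and $X_*(t)$. Instead,
\[
Z=O\bigl(|D\eta|/(\eta\sqrt h)\bigr),\qquad X=X_*(t)+O\bigl(|D\eta|/(\eta\sqrt h)\bigr),
\]
using $U\geqslant\frac{m-q}{2}\rho>0$ from \eqref{eq:bounds}. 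The expression for $\mathcal P\log G/h$ is a quadratic in $(X,Z,T)$ whose minimum under the constraints is $K_\beta(t)$, so the perturbation produces errors of the form
\[
(1+t)\frac{|D\eta|}{\eta\sqrt h}+\frac{|D\eta|^2}{\eta^2 h}.
\]
The term $t\omega'$ multiplying $\langle AD\log t,D\log t\rangle$ is controlled by the bound $|t\omega'|\leqslant C$. Combining these with the cutoff terms $\mathcal L\log\eta\geqslant -C/(\eta R^2)-C|D\eta|^2/\eta^2$, we aim to get at $x_0$
\[
\kappa(1+t)^2h\leqslant C\Bigl(\frac{(1+t)\sqrt h}{\eta^{1/2}R}+\frac{1}{\eta R^2}\Bigr).
\]
By Young's inequality this gives $\eta h(x_0)\leqslant C R^{-2}$, with $C$ independent of $R$. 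The step I expect to be the main obstacle is the bookkeeping here. One must absorb the linear-in-Hessian errors, including the $|T|$-dependent part of the $\mathcal P$ versus $\mathcal L$ difference, into the coercive quadratic terms. One must also check that the $(1+t)^2$ weight in the hypothesis on $K_\beta$ is exactly what dominates errors growing like $t$, which come from the $-qtX$ and $pt$ terms. Failing a clean absorption, I would retain a small fraction of the quadratic form (shrinking $\kappa$) to absorb cross terms via Cauchy--Schwarz.

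Finally, evaluate $W$ on $B_R$ and compare with $W(x_0)$. For $x\in B_R$,
\[
u^{-\frac{m-q}{2}\beta}h^{\frac{m-q}{2}}(x)\leqslant C\,W(x_0)\leqslant C\,u(x_0)^{-\frac{m-q}{2}\beta}(\eta h)^{\frac{m-q}{2}}(x_0)\leqslant C\,u(x_0)^{-\frac{m-q}{2}\beta}R^{-(m-q)}.
\]
Using \eqref{eq:lower} with $|x_0|\leqslant2R$ gives $u(x_0)^{-\beta}\leqslant CR^{\beta\frac{n-m}{m-1}}$, so the right-hand side is bounded by
\[
C R^{\frac{m-q}{2}\bigl(\beta\frac{n-m}{m-1}-2\bigr)},
\]
which tends to $0$ as $R\to\infty$ precisely because $\beta<\beta_*$. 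The case $\beta\geqslant0$ ensures the direction of the inequality on $u^{-\beta}$ is the right one. Hence $Du(x)=0$ for every fixed $x$, so $u$ is constant, contradicting the assumption.
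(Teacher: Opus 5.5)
Up to the final step your plan is the paper's proof. You maximize a cutoff times $G$ and note that the maximum lies in $\Omega_{\mathrm{reg}}$. Using $U\geqslant\frac{m-q}{2}\rho$ and \eqref{eq:Df}, you get $|X-X_*(t)|+|Z|\leqslant C|D\log G|/\sqrt h$. You then bound $\mathcal P\log G/h$ below by $K_\beta(t)$ minus $C(1+t)|X-X_*(t)|+C\bigl((X-X_*(t))^2+|Z|^2\bigr)$, absorb with Young's inequality, and conclude with \eqref{eq:lower} and $\beta<\beta_*$.

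The bookkeeping you flagged as the main obstacle is lighter than you expect. By the trace relation \eqref{eq:trace}, the drift is $(\mathcal P-\mathcal L)\varphi=(m-2)\frac{r}{u}\bigl(-t\varphi_1+2\sum_{\alpha}Z_\alpha\varphi_\alpha\bigr)$. Hence $(\mathcal P-\mathcal L)\log G=2(m-2)Uh\bigl(-t(X-X_*(t))+2|Z|^2\bigr)$ involves no $T$ at all. The trace-free part $T^\circ$ enters $\mathcal P\log G/h$ only through the nonnegative term $2U|T^\circ|^2$.

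The genuine problem is the power of the cutoff. With $W=\eta G$, the maximum-point analysis gives $(1+t)^2h\leqslant C/(\eta R^2)$ at $x_0$, so
\[
W(x_0)\leqslant C\,u(x_0)^{-\frac{m-q}{2}\beta}\,\eta\,h^{\frac{m-q}{2}}
=C\,u(x_0)^{-\frac{m-q}{2}\beta}\,\eta^{1-\frac{m-q}{2}}(\eta h)^{\frac{m-q}{2}}
\leqslant C\,u(x_0)^{-\frac{m-q}{2}\beta}R^{-(m-q)}\,\eta(x_0)^{1-\frac{m-q}{2}}.
\]
Your chain drops the factor $\eta(x_0)^{1-(m-q)/2}$; that is, it uses $\eta\leqslant\eta^{(m-q)/2}$. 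This is false when $m-q>2$ (for instance $m=4$, $q=1$), a range covered by the proposition. Nothing prevents $\eta(x_0)$ from being small. The factor $(1+t)^2$ does not rescue this either: $t^2h=u^{2\ell}h^{1-(m-q)}$ only yields a lower bound on $h$ in that range.

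The repair is to localize with the matching power. With your cutoff, take $W=\eta^{(m-q)/2}G$, so that $D\log G=-\frac{m-q}{2}D\log\eta$. The same estimates give $\eta h\leqslant CR^{-2}$, and the powers of $\eta$ then cancel exactly. The paper uses $\eta_R=1-|x|^2/R^2$, which only gives $\eta_R^2h\leqslant CR^{-2}$, and correspondingly takes $W_R=\eta_R^{m-q}G$.
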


\begin{proof}
At a general point of $\Omega_{\mathrm{reg}}$, the gradient of $G$
need not vanish. We first estimate how much
$\mathcal P\log G/h$ can differ from the value used in the
definition of $K_\beta(t)$.

Write the tangential Hessian block as
\[
 T=\frac{\operatorname{tr}T}{n-1}I+T^\circ,
 \qquad \operatorname{tr}T^\circ=0.
\]
By \eqref{eq:gradients}, \eqref{eq:UV}, and the definition of
$X_*(t)$,
\begin{equation}\label{eq:Df}
 \frac{u}{r}D\log G
 =2U\bigl(X-X_*(t),Z\bigr),
 \qquad
 \frac{|D\log G|^2}{h}
 =4U^2\left(\bigl(X-X_*(t)\bigr)^2+|Z|^2\right).
\end{equation}
The trace relation \eqref{eq:trace} and the identity
\[
D\log t=\ell D\log u-\frac{m-q}{2}D\log h
\]
imply
\begin{align}
 |T|^2
 &=\frac{\bigl[t+(m-1)X\bigr]^2}{n-1}+|T^\circ|^2,
 \label{eq:tracefree}\\
 \frac{\langle A D\log t,D\log t\rangle}{h}
 &=(m-1)\left(
 \frac{\ell-\frac{m-q}{2}\beta}{1-\omega(t)}
 -(m-q)\bigl(X-X_*(t)\bigr)
 \right)^2
 +(m-q)^2|Z|^2.
 \label{eq:offgradient}
\end{align}
The assumptions in \eqref{eq:bounds} ensure that
\[
 \frac{m-q}{2}\rho\leqslant U(t)\leqslant\frac{m-q}{2},
 \qquad |X_*(t)|\leqslant C,
 \qquad
 \left|
 \frac{\ell-\frac{m-q}{2}\beta}{1-\omega(t)}
 \right|\leqslant C.
\]
Substituting \eqref{eq:tracefree} and \eqref{eq:offgradient} into
\eqref{eq:chain}, and subtracting the expression obtained when
\[
X=X_*(t),\qquad Z=0,\qquad T^\circ=0,
\]
we find
\begin{equation}\label{eq:Pestimate}
 \frac{\mathcal P\log G}{h}
 \geqslant K_\beta(t)
 -C(1+t)\bigl|X-X_*(t)\bigr|
 -C\left(\bigl(X-X_*(t)\bigr)^2+|Z|^2\right).
\end{equation}
The term involving $T^\circ$ is nonnegative and has therefore been
omitted from the right-hand side.

We next replace $\mathcal P$ with $\mathcal L$, since $\mathcal L$
is the operator used at a maximum point. Expanding the derivative
in \eqref{eq:operators},
\[
 (\mathcal P-\mathcal L)\varphi
 =r^{2-m}\partial_i\bigl(r^{m-2}A_{ij}\bigr)\varphi_j.
\]
In the frame $e_1=Du/r$, the coefficient of $\varphi_1$ before
using the equation is
\[
 \frac{m-2}{r}\left(
 \sum_{\alpha=2}^n u_{\alpha\alpha}+(m-1)u_{11}
 \right),
\]
and the coefficient of $\varphi_\alpha$ is
\[
\frac{2(m-2)}{r}u_{1\alpha}.
\]
The trace relation \eqref{eq:trace} therefore yields
\begin{equation}\label{eq:drift}
 (\mathcal P-\mathcal L)\varphi
 =(m-2)\frac{r}{u}
 \left(
 -t\varphi_1+2\sum_{\alpha=2}^n Z_\alpha\varphi_\alpha
 \right).
\end{equation}
Taking $\varphi=\log G$ and using \eqref{eq:Df}, we obtain
\[
 |(\mathcal P-\mathcal L)\log G|
 \leqslant Ch\left(
 t\bigl|X-X_*(t)\bigr|+|Z|^2
 \right).
\]
Combining this estimate with \eqref{eq:Pestimate}, we may replace
$\mathcal P$ by $\mathcal L$ on the left-hand side, after changing
the constant $C$.

The elementary inequality
\[
 C(1+t)\bigl|X-X_*(t)\bigr|
 \leqslant \frac{\kappa}{2}(1+t)^2
 +C\bigl(X-X_*(t)\bigr)^2
\]
and the first identity in \eqref{eq:Df} now imply
\begin{equation}\label{eq:local}
 \mathcal L\log G
 \geqslant\frac{\kappa}{2}h(1+t)^2-C|D\log G|^2
 \geqslant c u^\beta G^{2/(m-q)}-C|D\log G|^2.
\end{equation}
For the second inequality, we used
\[
 u^\beta G^{2/(m-q)}
 =h e^{2\Phi(t)/(m-q)}
\]
and the upper bound for $\Phi$.

We now apply \eqref{eq:local} at a maximum point. From
\eqref{eq:auxiliary},
\[
 G
 =u^{-\frac{m-q}{2}(\beta+2)}
 |Du|^{m-q}e^{\Phi(t)}.
\]
On each compact set, $u$ is bounded above and bounded away from
zero, while $\Phi$ is bounded. Since $m-q>0$, it follows that
\[
 0<G\leqslant C|Du|^{m-q}
 \qquad\text{on }\Omega_{\mathrm{reg}}.
\]
Hence $G$ extends continuously to $\{|Du|=0\}$ by assigning the
value zero there.

Assume that $\Omega_{\mathrm{reg}}$ is nonempty. Choose $R$ large
enough that $B_R=B_R(0)$ contains a point of
$\Omega_{\mathrm{reg}}$, and define
\[
 \eta_R(x)=1-\frac{|x|^2}{R^2},
 \qquad
 W_R(x)=\eta_R(x)^{m-q}G(x).
\]
The function $W_R$ is continuous on $\overline B_R$, vanishes on
$\partial B_R$ and on $\{|Du|=0\}$, and is positive somewhere.
It therefore attains a positive maximum at a point
\[
x_R\in B_R\cap\Omega_{\mathrm{reg}}.
\]
At this point,
\[
 D\log G=-(m-q)D\log\eta_R,
 \qquad
 \mathcal L\log G\leqslant-(m-q)\mathcal L\log\eta_R.
\]
Since the eigenvalues of $A$ are bounded above and below by positive
constants depending only on $m$, direct differentiation of
$\eta_R$ shows that
\[
 |D\log G|^2\leqslant\frac{C}{R^2\eta_R^2},
 \qquad
 \mathcal L\log G\leqslant\frac{C}{R^2\eta_R^2}
 \quad\text{at }x_R.
\]
Substituting these inequalities into \eqref{eq:local}, we obtain
\[
 u(x_R)^\beta G(x_R)^{2/(m-q)}
 \leqslant\frac{C}{R^2\eta_R(x_R)^2}.
\]
Because
\[
W_R(x_R)=\eta_R(x_R)^{m-q}G(x_R),
\]
the powers of $\eta_R(x_R)$ cancel, and
\begin{equation}\label{eq:decay}
 \left(\max_{\overline B_R}W_R\right)^{2/(m-q)}
 \leqslant C R^{-2}u(x_R)^{-\beta}.
\end{equation}
The lower bound \eqref{eq:lower} and the fact that $|x_R|<R$ imply,
for $R\geqslant1$,
\[
 u(x_R)^{-\beta}
 \leqslant C R^{\beta(n-m)/(m-1)}.
\]
Thus \eqref{eq:decay} becomes
\[
 \left(\max_{\overline B_R}W_R\right)^{2/(m-q)}
 \leqslant C R^{-2+\beta(n-m)/(m-1)}.
\]
The exponent on the right is negative by \eqref{eq:beta}. If
$x\in\Omega_{\mathrm{reg}}$ is fixed, then
\[
W_R(x)\leqslant\max_{\overline B_R}W_R
\qquad\text{and}\qquad
\eta_R(x)\to1
\]
as $R\to\infty$. Hence the preceding estimate forces $G(x)=0$,
which contradicts the definition of $G$ on
$\Omega_{\mathrm{reg}}$. Therefore $\Omega_{\mathrm{reg}}$ is empty,
and $u$ is constant.
\end{proof}

\section{Construction of the auxiliary function}\label{sec:construction}

We continue to assume that $0<q<m$. Proposition~\ref{prop:criterion} shows that it is enough to choose $\Phi$ and $\beta$ so that the function $K_\beta(t)$ satisfies the estimates in \eqref{eq:bounds}. Recall that
\[
 \omega(t)=t\Phi'(t).
\]
We begin with
\[
 \beta_* = \frac{2(m-1)}{n-m},
\]
for which $K_{\beta_*}(0)=0$. We first choose $\omega$ so that
$K_{\beta_*}(t)>0$ for every $t>0$. We then change $\omega$ only for large $t$ so that
\[
 \Phi(t)=\int_0^t\frac{\omega(s)}{s}\,ds
\]
is bounded. Finally, we choose $\beta<\beta_*$ close to $\beta_*$ so that $K_\beta(0)>0$ as well. These steps produce the lower bound required in Proposition~\ref{prop:criterion}.

\subsection{From positivity for $t>0$ to a uniform lower bound}

\begin{lemma}\label{lem:completion}
Suppose that $\omega_0$ is smooth on $[0,\infty)$ and satisfies
\begin{gather}
 \omega_0(0)=0,\qquad
 0\leqslant\omega_0(t)\leqslant1-\rho,\qquad |t\omega_0'(t)|\leqslant C,
 \label{eq:initialslope}\\
 K_{\beta_*}(t)>0\quad(t>0),\qquad
 K_{\beta_*}(0)=0,\qquad K_{\beta_*}'(0)>0.
 \label{eq:initialkernel}
\end{gather}
Then there is a smooth function $\omega$ that agrees with $\omega_0$ on a bounded interval and is zero for all sufficiently large $t$. For this choice of $\omega$, one can take $\beta<\beta_*$ sufficiently close to $\beta_*$ so that
\[
 \Phi(t)=\int_0^t\frac{\omega(s)}{s}\,ds
\]
satisfies all the assumptions of Proposition~\ref{prop:criterion}.
\end{lemma}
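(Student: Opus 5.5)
The plan has three parts. First, a lower bound for $K_\beta(t)$ at large $t$ that holds uniformly in the admissible $\omega$. Second, a cutoff of $\omega_0$ beyond the threshold where that bound applies. Third, a perturbation of $\beta$ below $\beta_*$ on the remaining compact interval.

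\emph{Step 1: uniform behaviour of $K_\beta$ for large $t$.} Take any smooth $\omega$ with $0\leqslant\omega\leqslant1-\rho$ and $|t\omega'|\leqslant C$, and any $\beta\in[0,\beta_*]$. By \eqref{eq:UV}:
\begin{itemize}
\item $U\in[\tfrac{m-q}{2}\rho,\tfrac{m-q}{2}]$, and $V$ is bounded;
\item hence $X_*=1-V/(2U)$ is bounded;
\item the last term in \eqref{eq:kernel} is bounded by a constant times $|t\omega'|$, so it is bounded.
\end{itemize}
Inspecting \eqref{eq:kernel}, the only quadratic term in $t$ is $2U t^2/(n-1)\geqslant (m-q)\rho\, t^2/(n-1)$, and every other term is $O(1+t)$. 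The constants depend only on $m,n,p,q,\rho,C$. Therefore there is $T_0\geqslant1$ such that
\[
K_\beta(t)\geqslant \frac{(m-q)\rho}{2(n-1)}\,t^2\qquad (t\geqslant T_0)
\]
for every such $\omega$ and every $\beta\in[0,\beta_*]$.

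\emph{Step 2: cutting off $\omega_0$.} Fix a smooth $\chi$ on $\R$ with $0\leqslant\chi\leqslant1$, $\chi=1$ on $(-\infty,\log T_0]$ and $\chi=0$ on $[\log T_0+1,\infty)$. Set $\omega(t)=\chi(\log t)\,\omega_0(t)$, with $\omega=\omega_0$ near $t=0$. Then:
\begin{itemize}
\item $0\leqslant\omega\leqslant1-\rho$;
\item $t\omega'=\chi'(\log t)\omega_0+\chi(\log t)\,t\omega_0'$ is bounded, so Step 1 applies with a fixed (possibly enlarged) constant $C$, with $T_0$ chosen after that constant;
\item $\omega=\omega_0$ on $[0,T_0]$ and $\omega=0$ for $t\geqslant eT_0$.
\end{itemize}
The function $\Phi(t)=\int_0^t\omega(s)s^{-1}\,ds$ is bounded. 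Indeed, $\omega_0(0)=0$ and smoothness give $\omega(s)/s$ bounded near $0$, and the integrand vanishes for $s\geqslant eT_0$. So the first two conditions of \eqref{eq:bounds} hold.

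\emph{Step 3: choosing $\beta<\beta_*$.} On $[0,T_0]$ we have $\omega=\omega_0$, so by \eqref{eq:initialkernel} there is $c_0>0$ with $K_{\beta_*}(t)\geqslant c_0 t$ there. We next compute $K_\beta(0)$. Since $\omega(0)=0$, we have $U=\tfrac{m-q}2$, $X_*=1+s$ with $s=\beta/2$, and
\[
\frac{K_\beta(0)}{(m-1)(m-q)}=(1+s)\Bigl[1-(1+s)\frac{n-m}{n-1}\Bigr].
\]
This vanishes exactly at $s=\frac{m-1}{n-m}$, i.e.\ at $\beta=\beta_*$. It is positive for $\beta$ slightly below $\beta_*$, and in fact for all $\beta\in[0,\beta_*)$.

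$K_\beta(t)$ is smooth, indeed polynomial, in $\beta$ with coefficients smooth in $t$ on $[0,T_0]$. So for $\varepsilon=\beta_*-\beta$ small we have
\[
|K_\beta-K_{\beta_*}|+|\partial_t(K_\beta-K_{\beta_*})|\leqslant C\varepsilon \quad\text{on }[0,T_0].
\]
We split $[0,T_0]$ at some $\delta>0$:
\begin{itemize}
\item On $[0,\delta]$, using $K_{\beta_*}'(0)>0$ and choosing $\delta$ small, $K_\beta(t)\geqslant K_\beta(0)+\tfrac12K_{\beta_*}'(0)\,t>0$.
\item On $[\delta,T_0]$, $K_\beta\geqslant c_0\delta-C\varepsilon>0$ once $\varepsilon$ is small.
\end{itemize}
Thus $K_\beta$ has a positive minimum on $[0,T_0]$. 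Combined with Step 1, and since $(1+t)^2\leqslant 4t^2$ for $t\geqslant T_0\geqslant1$, this gives $K_\beta(t)\geqslant\kappa(1+t)^2$ for all $t\geqslant0$.

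The main care point is Step 1. The $t^2$ lower bound must hold with constants independent of the cutoff and of $\beta$. Only then can $T_0$ be fixed first and the cutoff placed beyond it, which avoids any circularity.
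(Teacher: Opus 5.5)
Your proposal is correct and follows essentially the same route as the paper: a $t^2$ lower bound for $K_\beta$ at large $t$ that is uniform in the cutoff and in $\beta$, then a cutoff $\chi(\log t)\,\omega_0(t)$ placed beyond $T_0$, then a continuity argument in $\beta$ on $[0,T_0]$ that uses the explicit positive value of $K_\beta(0)$ for $\beta<\beta_*$ together with $K_{\beta_*}'(0)>0$. Your Step 3 is more explicit than the paper's, and the only point left implicit is that $\chi$ should be a translate of one fixed profile, so that $\|\chi'\|_\infty$, and hence the constant feeding into Step 1, does not depend on $T_0$.
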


\begin{proof}
Choose a smooth function $\chi:\R\to[0,1]$ such that $\chi=1$ on $(-\infty,0]$ and $\chi=0$ on $[1,\infty)$. For $T_0>1$, define
\[
 \omega(t)=\chi\bigl(\log(t/T_0)\bigr)\omega_0(t)
 \quad(t>0),\qquad \omega(0)=0.
\]
Then $\omega=\omega_0$ for $t\leqslant T_0$ and $\omega=0$ for $t\geqslant eT_0$. Moreover,
\[
 0\leqslant\omega(t)\leqslant1-\rho,
 \qquad |t\omega'(t)|\leqslant C,
\]
where the constant is independent of $T_0$.

In formula~\eqref{eq:kernel}, the coefficient of $t^2$ is
\[
 \frac{(m-q)(1-\omega(t))}{n-1}.
\]
The remaining terms are bounded below by $-Ct-C$, uniformly in $T_0$ and for $\beta$ close to $\beta_*$. Hence
\begin{equation}\label{eq:tail}
 K_\beta(t)\geqslant\frac{(m-q)\rho}{n-1}t^2-Ct-C.
\end{equation}
If $T_0$ is sufficiently large, the right-hand side is positive for $t\geqslant T_0$. On $[0,T_0]$, the new function $\omega$ agrees with $\omega_0$. Thus, at $\beta=\beta_*$,
\[
 K_{\beta_*}(t)>0\qquad(t>0).
\]
Since $\omega$ is smooth and $\omega(0)=0$, the integral defining $\Phi$ is smooth at $t=0$. Since $\omega(t)=0$ for $t\geqslant eT_0$, the function $\Phi$ is bounded and constant for all sufficiently large $t$.

We next choose $\beta<\beta_*$. At $t=0$, formula~\eqref{eq:kernel} becomes
\begin{equation}\label{eq:origin}
 K_\beta(0)
 =\frac{(m-q)(m-1)(\beta+2)[2(m-1)-(n-m)\beta]}{4(n-1)}.
\end{equation}
This number is positive whenever $0\leqslant\beta<\beta_*$. By \eqref{eq:initialkernel} and continuity with respect to $(t,\beta)$, the derivative $K_\beta'(t)$ remains positive on a fixed interval near $t=0$ when $\beta$ is sufficiently close to $\beta_*$. Formula~\eqref{eq:origin} then proves positivity near $t=0$. Positivity on the remaining bounded interval follows from continuity, while \eqref{eq:tail} controls all sufficiently large $t$. Therefore, for some $\kappa>0$,
\[
 K_\beta(t)\geqslant\kappa(1+t)^2\qquad(t\geqslant0).
\]
\end{proof}

\subsection{The case $\ell\leqslant\ell_1$}

Define
\begin{equation*}\label{eq:ellone}
 \ell_1=\frac{(m-1)(m-q)}{n-m}.
\end{equation*}
Assume first that $\ell\leqslant\ell_1$. Choose $\Phi=0$. Then $\omega(t)=0$ and $t\omega'(t)=0$, and formula~\eqref{eq:kernel} becomes
\begin{align*}
 \frac{K_\beta(t)}{m-q}
 ={}&\frac{t^2}{n-1}
 +\left[\frac{2(m-1)}{n-1}-\ell
       +\beta\left(\frac{m-1}{n-1}+\frac{m-1-q}{2}\right)\right]t\notag\\
 &+\frac{(m-1)(\beta+2)[2(m-1)-(n-m)\beta]}{4(n-1)}.
 \label{eq:constant}
\end{align*}
At $\beta=\beta_*$, the constant term is zero, while the coefficient of $t$ is
\[
 \ell_1-\ell+\frac{m-1}{n-m}>0.
\]
If $\beta<\beta_*$ is sufficiently close to $\beta_*$, both the constant term and the coefficient of $t$ are positive. The coefficient of $t^2$ is also positive. Thus
\[
 K_\beta(t)\geqslant\kappa(1+t)^2\qquad(t\geqslant0),
\]
and Proposition~\ref{prop:criterion} shows that $u$ is constant.

\subsection{Choosing $\omega$ when $\ell>\ell_1$}

Assume now that
\begin{equation*}\label{eq:mu}
 \delta_\ell:=\ell-\ell_1>0.
\end{equation*}
The expression
\begin{equation*}\label{eq:Dq}
 \mathcal D(q):=n(m-1)-(n-1)q
      =(m-1)+(n-1)(m-1-q)
\end{equation*}
appears repeatedly below, so we introduce this notation once. For
\[
 0<\theta<\frac{2}{\max\{m-1,\mathcal D(q)\}},
\]
define
\begin{equation*}\label{eq:Lambda}
 \Lambda(\theta)=\ell_1+
 \frac{(m-1)\bigl(2/(n-m)+\theta\bigr)}{2-(m-1)\theta}.
\end{equation*}
A direct differentiation shows that
\[
 \Lambda'(\theta)
 =\frac{2(m-1)(n-1)}{(n-m)[2-(m-1)\theta]^2}>0.
\]

If $0<q<m-1$, then $\mathcal D(q)>m-1$ and
\begin{equation}\label{eq:Lambdaendpoint}
\Lambda\left(\frac{2}{\mathcal D(q)}\right)
=\frac{(m-1)(m-q)^2}{(n-m)(m-1-q)}.
\end{equation}
Thus the first condition in Theorem~\ref{thm:main} allows us to
choose $\theta$ below $2/\mathcal D(q)$ such that
$\ell<\Lambda(\theta)$.

If $m-1\leqslant q<m$, then $\mathcal D(q)\leqslant m-1$ and
$\Lambda(\theta)\to\infty$ as $\theta\uparrow2/(m-1)$. The assumptions of Theorem~\ref{thm:main} therefore allow us to choose $\theta$ such that
\begin{equation}\label{eq:thetachoice}
 0<\theta<\frac{2}{\max\{m-1,\mathcal D(q)\}},
 \qquad \ell<\Lambda(\theta).
\end{equation}
This choice is possible in both ranges of $q$; only the upper endpoint of the interval for $\theta$ is different.

Define
\begin{equation*}\label{eq:d}
 \lambda_\theta:=\frac{(m-q)\theta}{2\delta_\ell}.
\end{equation*}
We first consider the simple function
\[
 \omega_0(t)=\frac{\lambda_\theta t}{1+\lambda_\theta t}.
\]
Let $K_{0,\beta_*}$ denote the function $K_{\beta_*}$ corresponding to this choice. Substitution into \eqref{eq:kernel} yields
\begin{align}
 K_{0,\beta_*}(t)
 =\frac{\lambda_\theta t}{1+\lambda_\theta t}
 \Biggl\{
 \frac{\delta_\ell[2-(m-1)\theta]}{\theta}
 [\Lambda(\theta)-\ell]
 +\frac{\delta_\ell^2[2-(m-1)\theta]
 [2-\mathcal D(q)\theta]}
 {(n-1)(m-q)\theta^2}\,\lambda_\theta t
 \Biggr\}.
 \label{eq:outer}
\end{align}
Both coefficients inside the braces are positive by \eqref{eq:thetachoice}. Hence
\[
 K_{0,\beta_*}(t)>0\qquad(t>0).
\]
However, $\omega_0(t)\to1$ as $t\to\infty$, whereas Proposition~\ref{prop:criterion} requires $\omega$ to remain a fixed positive distance below $1$. We therefore replace $\omega_0$ by the following nearby function.

\subsection{A choice of $\omega$ that remains below $1$}

Choose $0<\nu\leqslant1$ sufficiently small that
\begin{equation}\label{eq:nuchoice}
 (m-1)(n-1)(m-q)\theta^2\nu
 \leqslant\frac12[2-(m-1)\theta]
                [2-\max\{\mathcal D(q),0\}\theta].
\end{equation}
The right-hand side is positive by \eqref{eq:thetachoice}. For $0<\varepsilon\leqslant1$, define
\begin{equation}\label{eq:explicit}
 1-\omega_\varepsilon(t)
 =\left(
 \frac{(1+\lambda_\theta t)^{-\nu}+\varepsilon}{1+\varepsilon}
 \right)^{1/\nu}.
\end{equation}
The number $\varepsilon$ keeps $1-\omega_\varepsilon$ positive for all $t$, while the small exponent $\nu$ controls the additional term that appears in the calculation below. Formula~\eqref{eq:explicit} shows that
\begin{equation}\label{eq:epsbounds}
 \omega_\varepsilon(0)=0,\qquad
 \left(\frac{\varepsilon}{1+\varepsilon}\right)^{1/\nu}
 \leqslant1-\omega_\varepsilon(t)\leqslant1.
\end{equation}
For each fixed $\varepsilon>0$, the function $\omega_\varepsilon$ is smooth on $[0,\infty)$. As $\varepsilon\downarrow0$, it converges smoothly on every bounded $t$-interval to $\omega_0$. We shall prove that, for every sufficiently small $\varepsilon>0$, the corresponding function $K_{\varepsilon,\beta_*}(t)$ is positive for all $t>0$.

To treat all $t\geqslant0$ in one calculation, set
\begin{equation}\label{eq:compact}
 \alpha=\frac{1}{1+\lambda_\theta t},\qquad
 z=\frac{\alpha}{(\alpha^\nu+\varepsilon)^{1/\nu}}.
\end{equation}
As $t$ increases from $0$ to $\infty$, the variable $\alpha$ decreases from $1$ to $0$. The second definition is chosen so that
\begin{equation}\label{eq:compactrelations}
 1-\omega_\varepsilon(t)
 =\frac{\alpha}{(1+\varepsilon)^{1/\nu}z},\qquad
 t\omega_\varepsilon'(t)
 =(1-\alpha)(1-\omega_\varepsilon(t))z^\nu.
\end{equation}
In particular,
\[
 0\leqslant t\omega_\varepsilon'(t)\leqslant1.
\]

At $\beta=\beta_*$, the relations in
\eqref{eq:compact} and \eqref{eq:compactrelations} imply
\begin{equation}\label{eq:scaled}
 \alpha t=\frac{1-\alpha}{\lambda_\theta},
 \qquad
 \alpha X_*(t)
 =
 \frac{\alpha(p+1)
 -\delta_\ell(1+\varepsilon)^{1/\nu}z}{m-q}.
\end{equation}
Although $t$ and $X_*(t)$ may be unbounded as $\alpha\to0$, the
two quantities in \eqref{eq:scaled} remain finite.

For the choice $\omega_\varepsilon$, let
$K_{\varepsilon,\beta_*}(t)$ denote the corresponding function in
\eqref{eq:kernel}. Since
\[
 \frac{\alpha^2}{1-\omega_\varepsilon(t)}>0,
\]
we define
\begin{equation}\label{eq:normalized}
 \widetilde K_\varepsilon(\alpha,z)
 :=
 \frac{\alpha^2}{1-\omega_\varepsilon(t)}
 K_{\varepsilon,\beta_*}(t).
\end{equation}
Substitution into \eqref{eq:kernel} yields
\begin{align}
 \widetilde K_\varepsilon(\alpha,z)
 ={}&(m-q)\left[
 -(m-1)\bigl(\alpha X_*(t)\bigr)^2
 +\frac{\bigl[\alpha t+(m-1)\alpha X_*(t)\bigr]^2}{n-1}
 \right]\notag\\
 &+(m-q)\left[
 -p\alpha(\alpha t)
 -q(\alpha t)\bigl(\alpha X_*(t)\bigr)
 +(m-1)\bigl(\alpha^2+\alpha(\alpha t)\bigr)
 \right]\notag\\
 &-(m-1)\left[
 \delta_\ell(1+\varepsilon)^{1/\nu}z-\ell\alpha
 \right](\alpha+\alpha t)+(m-1)\delta_\ell^2(1-\alpha)
 (1+\varepsilon)^{2/\nu}z^{\nu+2}.
 \label{eq:H}
\end{align}
Using the explicit formulas in \eqref{eq:scaled}, the right-hand
side extends continuously to
\[
 (\alpha,z,\varepsilon)\in[0,1]^2\times[0,1].
\]
In particular, it remains finite when $\alpha=0$ or $z=0$.
This is why we multiply
$K_{\varepsilon,\beta_*}(t)$ by the positive factor in
\eqref{eq:normalized}.

We need two special cases of this formula when $\varepsilon=0$. First, if $z=1$, then \eqref{eq:outer} becomes
\begin{align}
 \widetilde K_0(\alpha,1)
 =(1-\alpha)\Biggl\{
 \frac{\delta_\ell[2-(m-1)\theta]}{\theta}
 [\Lambda(\theta)-\ell] \,\alpha
 +\frac{\delta_\ell^2[2-(m-1)\theta]
 [2-\mathcal D(q)\theta]}
 {(n-1)(m-q)\theta^2}(1-\alpha)
 \Biggr\}.
 \label{eq:outerface}
\end{align}
This expression is positive for $0<\alpha<1$. Second, setting $\alpha=0$ in \eqref{eq:H} results in
\begin{align}
 \widetilde K_0(0,z)
 =\frac{\delta_\ell^2}{(n-1)(m-q)\theta^2}
 \Bigl\{&[2-(m-1)\theta z][2-\mathcal D(q)\theta z]\notag\\
 &-(m-1)(n-1)(m-q)\theta^2z^2(1-z^\nu)\Bigr\}.
 \label{eq:tailface}
\end{align}
For $0<z\leqslant1$,
\begin{equation*}\label{eq:smallerror}
 z^2(1-z^\nu)\leqslant\nu z^2(-\log z)\leqslant\nu,
\end{equation*}
and the same inequality holds at $z=0$ by continuity. Also,
\[
 [2-(m-1)\theta z][2-\mathcal D(q)\theta z]
 \geqslant[2-(m-1)\theta]
      [2-\max\{\mathcal D(q),0\}\theta].
\]
The choice of $\nu$ in \eqref{eq:nuchoice} therefore implies
\begin{equation}\label{eq:facegap}
 \widetilde K_0(0,z)
 \geqslant\frac{\delta_\ell^2[2-(m-1)\theta]
 [2-\max\{\mathcal D(q),0\}\theta]}
 {2(n-1)(m-q)\theta^2}>0
 \qquad(0\leqslant z\leqslant1).
\end{equation}

We now prove positivity for small $\varepsilon$ by contradiction. Suppose that there are sequences
\[
 \varepsilon_j\downarrow0,\qquad t_j>0,
 \qquad K_{\varepsilon_j,\beta_*}(t_j)\leqslant0.
\]
After passing to a subsequence, the variables in \eqref{eq:compact} satisfy
\[
 \alpha_j\to\alpha_0\in[0,1],\qquad
 z_j\to z_0\in[0,1].
\]
The factor used in \eqref{eq:normalized} is positive, so
\[
 \widetilde K_{\varepsilon_j}(\alpha_j,z_j)\leqslant0.
\]
If $\alpha_0=0$, continuity and \eqref{eq:facegap} lead to a contradiction. If $0<\alpha_0<1$, then \eqref{eq:compact} implies $z_j\to1$, and \eqref{eq:outerface} again leads to a contradiction.

The only remaining case is $\alpha_0=1$, which is equivalent to $t_j\to0$. Formulas~\eqref{eq:explicit} and \eqref{eq:kernel} show that $K_{\varepsilon,\beta_*}(t)$ is smooth in $(t,\varepsilon)$ near $(0,0)$ and satisfies
\[
 K_{\varepsilon,\beta_*}(0)=0.
\]
Formula~\eqref{eq:outer} shows that
\[
 \partial_tK_{0,\beta_*}(0)
 =\lambda_\theta
 \frac{\delta_\ell[2-(m-1)\theta]}{\theta}
 [\Lambda(\theta)-\ell]>0.
\]
By continuity, $\partial_tK_{\varepsilon,\beta_*}(t)>0$ when $t$ and $\varepsilon$ are sufficiently small. Hence
\[
 K_{\varepsilon,\beta_*}(t)
 =t\int_0^1\partial_tK_{\varepsilon,\beta_*}(st)\,ds>0
\]
for all sufficiently small positive $t$ and $\varepsilon$. This excludes the last case.

We have proved that every sufficiently small fixed $\varepsilon>0$ satisfies
\[
 K_{\varepsilon,\beta_*}(t)>0\quad(t>0),\qquad
 K_{\varepsilon,\beta_*}(0)=0,\qquad
 K_{\varepsilon,\beta_*}'(0)>0.
\]
The bounds in \eqref{eq:epsbounds} and
\eqref{eq:compactrelations} verify the remaining assumptions of
Lemma~\ref{lem:completion}. Fix one sufficiently small
$\varepsilon>0$. The lemma changes $\omega_\varepsilon$ only for
large $t$ and then allows us to choose $\beta<\beta_*$. Applying
Proposition~\ref{prop:criterion}, we obtain Theorem~\ref{thm:main}.
This completes its proof.

\section{The cases $q\geqslant m$}\label{sec:upper}

\subsection{The case $q=m$}

We first treat the borderline case $q=m$.

\begin{proposition}\label{prop:qequalsm}
Let $1<m<n$ and $p\in\mathbb R$. Every positive
$C^1_{\mathrm{loc}}$ weak solution of
\[
-\Delta_m u=u^p|Du|^m
\qquad\text{in }\mathbb R^n
\]
is constant.
\end{proposition}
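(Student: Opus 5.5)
We transform the equation with an increasing change of the dependent variable $v=F(u)$, chosen so that the gradient term disappears. For $\varphi$ of one variable, on the regular set,
\[
\Delta_m \varphi(u)=\varphi'(u)^{m-1}\Delta_m u+(m-1)\varphi'(u)^{m-2}\varphi''(u)|Du|^m .
\]
Inserting $\Delta_m u=-u^p|Du|^m$ gives
\[
\Delta_m\varphi(u)=\varphi'^{\,m-2}|Du|^m\bigl[(m-1)\varphi''-u^p\varphi'\bigr].
\]
We therefore choose $\varphi'(s)=\exp\!\bigl(\frac{1}{m-1}\int_1^s\sigma^p\,d\sigma\bigr)>0$. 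This makes $v=F(u):=\int_1^u\varphi'(s)\,ds$ strictly increasing, $C^1$, and formally $m$-harmonic.

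The first step is to justify this on the level of weak solutions. Test the equation with $\psi\,g(u)$, where $\psi\in C_c^\infty$ and $g=\varphi'^{\,m-1}$. This is legitimate because $u\in C^1_{\mathrm{loc}}$ and $g$ is $C^1$ on $(0,\infty)$; approximating the test function by smooth ones is routine, since all integrands are continuous. Since
\[
g'(u)=g(u)\,u^p,
\]
we get
\[
\int |Du|^{m-2}Du\cdot D\psi\, g(u)+\int g(u)u^p|Du|^m\psi
=\int u^p|Du|^m g(u)\psi .
\]
The two gradient terms cancel. Because $|Dv|^{m-2}Dv=g(u)|Du|^{m-2}Du$, this says exactly that $v$ is a weak $m$-harmonic function on $\mathbb R^n$. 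No second derivatives are used, so the set $\{|Du|=0\}$ causes no difficulty.

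The second step is to bound $v$ from one side and apply the Liouville theorem for $m$-harmonic functions. Since $u>0$ and $F$ is increasing, $v>F(0^+):=\lim_{s\downarrow0}F(s)$. This limit is finite precisely when $\varphi'$ is integrable near $0$.
\begin{itemize}
\item For $p>-1$, $\int_1^s\sigma^p\,d\sigma$ stays bounded as $s\to0$, so $\varphi'$ is bounded near $0$ and $F(0^+)$ is finite. Hence $v$ is bounded below.
\item For $p\le-1$, $\int_1^s\sigma^p\,d\sigma\to+\infty$ as $s\downarrow0$, so $\varphi'\to\infty$ and $F(0^+)$ may be $-\infty$. Then we must look at the other end. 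As $s\to\infty$, $\int_1^s\sigma^p\,d\sigma$ equals $\log s$ when $p=-1$, and tends to a finite limit when $p<-1$. So $\varphi'(s)$ behaves like $s^{1/(m-1)}$ or a constant, and $F(\infty)=+\infty$. This does not bound $v$ from above.
\end{itemize}

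We resolve the case $p\le-1$ by using the other natural transform, or rather by exploiting $m$-superharmonicity of $u$ directly. Since $u$ is a positive $m$-superharmonic function on $\mathbb R^n$, we already have a one-sided bound. The plan is to pick a decreasing transform $w=H(u)$ that is $m$-harmonic and bounded. The ODE above is linear in $\varphi'$ and determines $\varphi$ uniquely up to affine maps, so we instead treat $-v$ as a function of $u$. The key observation is this: when $F(0^+)=-\infty$, the function $v$ is an entire $m$-harmonic function. By the Harnack inequality, a one-sided bound is all that is needed. Applied to $-v$, the relevant bound is $v\le F(\sup u)$. The remaining work is to establish that $u$ is bounded above, or that $u$ itself bounded below suffices via $v$.

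In any case, once $v$ is bounded on one side, the Liouville theorem for entire $m$-harmonic functions that are bounded on one side (Serrin's Harnack inequality, \cite{SerrinZou}) gives that $v$ is constant. Hence $u=F^{-1}(v)$ is constant.

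The main obstacle is the one-sided bound for $v$ when $p\le-1$. There I would try to combine the lower bound \eqref{eq:lower} with the growth of $F$ near $0$: for $p<-1$, $\varphi'(s)\sim\exp\bigl(c\,s^{p+1}\bigr)$, which grows very fast as $s\to0$. Alternatively, I would apply the Liouville theorem for one-sided bounded $m$-harmonic functions to $-v$, after showing $u$ is bounded. Boundedness of $u$ would follow from a comparison on balls between $u$ and its transform, since $v$ is $m$-harmonic and $F^{-1}$ is concave when $\varphi''>0$.
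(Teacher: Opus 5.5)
Your change of variables and the cancellation in the weak formulation are the paper's argument: your $g=\varphi'^{\,m-1}=\exp\bigl(\int_1^s\sigma^p\,d\sigma\bigr)$ is a positive constant multiple of the paper's $g$. As written, however, the proof is incomplete for $p\le-1$. There you call the one-sided bound for $v$ the main obstacle and offer only strategies you do not carry out: an upper bound for $u$, which you never prove, a comparison on balls, and concavity of $F^{-1}$.

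The obstacle comes from a sign error. For every $p\in\mathbb R$ and $0<s<1$,
\[
\int_1^s\sigma^p\,d\sigma=-\int_s^1\sigma^p\,d\sigma<0,
\]
so for $p\le-1$ this integral tends to $-\infty$, not $+\infty$, as $s\downarrow0$. Consequently $\varphi'(s)\to0$. Indeed $\varphi'(s)=s^{1/(m-1)}$ when $p=-1$, and $\varphi'(s)$ tends to zero faster than any power of $s$ when $p<-1$.

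More simply, $0<\varphi'<1$ on $(0,1)$ for every $p$. Hence
\[
F(0^+)=-\int_0^1\varphi'(\tau)\,d\tau\in(-1,0)
\]
is finite in all cases, and $v+1$ is a positive entire weakly $m$-harmonic function. The Liouville theorem for such functions then shows that $v$, and hence $u$, is constant, with no case distinction. With this correction your proof is the paper's proof. The paper normalizes $F(s)=\int_0^s g(\tau)^{1/(m-1)}\,d\tau$ so that $F(u)>0$ directly. The last paragraph of your proposal, including the appeal to the lower bound for $u$, should be removed.
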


\begin{proof}
For $s>0$, define
\[
g(s)=
\begin{cases}
\displaystyle
\exp\left(\frac{s^{p+1}}{p+1}\right),
& p\ne-1,\\[6pt]
s,
& p=-1,
\end{cases}
\]
and
\[
F(s)=\int_0^s g(\tau)^{1/(m-1)}\,d\tau.
\]
Then
\[
g'(s)=s^p g(s),\qquad F'(s)>0,
\qquad F'(s)^{m-1}=g(s).
\]
The integral defining $F$ is finite at the origin for every
$p\in\mathbb R$. Indeed, its integrand tends to $1$ if $p>-1$,
equals $s^{1/(m-1)}$ if $p=-1$, and tends to zero faster than any
power if $p<-1$.

Let $\eta\in C_c^\infty(\mathbb R^n)$. Since $u$ is positive,
$g(u)\eta$ is an admissible test function by standard
approximation. The weak equation yields
\[
\int_{\mathbb R^n}
g(u)|Du|^{m-2}Du\cdot D\eta
+
\int_{\mathbb R^n}
g'(u)|Du|^m\eta
=
\int_{\mathbb R^n}
u^p g(u)|Du|^m\eta.
\]
Since $g'(u)=u^p g(u)$, the last two integrals cancel. Hence
\[
\int_{\mathbb R^n}
g(u)|Du|^{m-2}Du\cdot D\eta=0.
\]
Moreover,
\[
|D(F(u))|^{m-2}D(F(u))
=
g(u)|Du|^{m-2}Du.
\]
Therefore $F(u)$ is a positive weakly $m$-harmonic function on
$\mathbb R^n$.

A positive entire $m$-harmonic function is constant. This follows,
for example, from the scale-invariant Harnack inequality and the
interior oscillation estimate. Thus $F(u)$ is constant. Since
$F'(s)>0$, the function $u$ is constant as well.
\end{proof}

\subsection{The case $q>m$}

Lu and Zhu proved that, for $q>m>1$ and every $p\in\mathbb R$,
each positive entire $C^1$ weak solution of
\[
-\Delta_m u=u^p|Du|^q
\qquad\text{in }\mathbb R^n
\]
is constant; see \cite[Theorem~1.4 and
Corollary~1.5]{LuZhu}.

Combining this result with Theorem~\ref{thm:main} and
Proposition~\ref{prop:qequalsm}, we obtain
Corollary~\ref{cor:complete}.

\noindent\textbf{Acknowledgments:}
Tian Wu was supported by the National Natural Science Foundation
of China (Grant No. 12601391) and the Fundamental Research Funds for
the Central Universities (Grant No. WK0010250106).
Hua Zhu was supported by the National Natural Science Foundation of
China (Grant No. 12501273, 12661041) and the Research Foundation
of Southwest University of Science and Technology
(Grant No. 25zx7153). Tian Wu and Hua Zhu were also supported by
the Open Research Fund of Hubei Key Laboratory of Mathematical
Sciences (Grant No. MPL2026ORG005).




\noindent\textbf{Use of Large Language Models, AI and Machine Learning Tools:}
We used ChatGPT (OpenAI) as an auxiliary tool in preparing this manuscript. ChatGPT assisted us in carrying out computations and developing arguments within the framework of our ideas and proofs. The authors have carefully checked all AI-generated content and take full responsibility for the mathematical statements, proofs, references, and conclusions presented in this manuscript. We have also made every effort to improve the clarity and readability of the manuscript.

The authors state no conflict of interest.


\bibliographystyle{amsplain}
\bibliography{references}

\footnotesize{
Contact information:
\begin{itemize}
    \item Tian Wu, School of Mathematical Sciences,
    University of Science and Technology of China,
    Hefei, Anhui, 230026, People's Republic of China.
    Email: \emph{wt1997@ustc.edu.cn}

    \item Jin Yan, Institute of Mathematics,
    Academy of Mathematics and Systems Science,
    Chinese Academy of Sciences,
    Beijing, 100190, People's Republic of China.
    Email: \emph{yanjin@amss.ac.cn}

    \item Hua Zhu, School of Mathematical and Physics,
    Southwest University of Science and Technology,
    Mianyang, Sichuan, 621010, People's Republic of China.
    Email: \emph{zhuhmaths@mail.ustc.edu.cn}
\end{itemize}
}

\end{document}